\newtheorem{theorem}{Theorem}[section]
\newtheorem{lemma}{Lemma}[section]
\newtheorem{proposition}{Proposition}[section]
\newtheorem{definition}{Definition}[section]
\newtheorem{corollary}{Corollary}[section]
\newtheorem{remark}{Remark}
\title{Ground subgroups}
\author{U.A. Rozikov\textsuperscript{1}\\
{\small \textsuperscript{1} Institute of Mathematics and
Information Technologies,\hfill{\ }}\\
\ \ {\small 29, Do'rmon Yo'li str., Tashkent, 100125, Uzbekistan.}\\ \ \ \ \ \ {\small Email: rozikovu@yandex.ru \hfill {\ }}}
\date{}
\begin{document}

\maketitle

\begin{abstract}
In this article we give a concept of ground subgroup
for finite and countable groups.
By our definition such a subgroup of a group depends on a given subset
of the group and on a given partition of the subset.
For finite and free groups we describe some sets of ground subgroups.
We apply the ground subgroups to describe ground states of a model
of statistical mechanics.

{\bf{Key words:}} Group, subgroup, ground subgroup, ground state,
configuration, Hamiltonian.
\end{abstract}

\section{Introduction} There are several thousand papers
and books devoted to the theory of groups. But still there are
unsolved problems, most of which arise
in solving of problems of natural sciences as physics,
biology etc. In particular, if configuration of a physical
system is located on a lattice (in our case on the graph of a group) then the configuration
can be considered as a function defined on the lattice. Usually,
more important configurations (functions) are periodic ones.
It is well-known that if
the lattice has a group representation then periodicity of a
function can be defined by a given subgroup of the representation.
More precisely, if a subgroup, say $H$,  is given, then one can
define $H$-periodic function as a function, which has a constant value
(depending only on the coset)
on each (right or left) coset of $H$. So the periodicity is related to a special partition
of the group (that presents the lattice on which our
physical system is located). There are many works devoted to
several kind of partitions of groups (lattices) (see e.g. \cite{CL},\cite{GR}, \cite{KM}-\cite{R2},
\cite{Y}). 

In this paper we study more general problem: given a subset $A$ of a
finite or countable group $G$, and given a partition of $A$, is there any subgroup $H$
of $G$ such that cosets of which divide the set $A$ exactly as the given partition?
This problem arises if, for example, one want to study the periodic ground configurations (states)
of a physical system with the Hamiltonian (energy) which is a sum of
interaction functions $I_A$ defined on $A$ (see e.g. \cite{G}, \cite{R3}, \cite{S}).
Note that a Hamiltonian is a function of configurations. A {\it ground state} of a Hamiltonian is a
(periodic) configuration which minimizes the Hamiltonian. Thus subgroups which we want to describe
are related to the ground states of a Hamiltonian, so we call them {\it ground subgroups}.

The paper is organized as follows. In section 2 we give all
necessary definitions and formulation of the problem.
In sections 3-5 we describe ground subgroups of
finite and free groups. A ground subgroup 
depends on a given subset, say $A$ and on a given
partition of the subset $A$. For finite groups we show that one can
choose $A$ and its partition such that there does not exist any ground subgroup
corresponding to this partition of the subset.
 Section 6 is devoted to an application of results to description of ground states of
a Hamiltonian defined on a Cayley tree. In the last section we discuss the results 
and give some open problems.

\section{Definitions and statement of the problem}

Let $G$ be a finite or countable group and $H$ be a subgroup of $G$. For $x\in G$
denote $Hx=\{yx: y\in H\}$ the right coset of the subgroup $H$.
Define the relation $\sim$ on $G$ of right congruence by $a\sim b$
if and only if $ab^{-1}\in H$. This relation is equivalence on $G$.
Hence the right cosets of $H$ are pairwise disjoint.
The cardinal of the collection of all right cosets is called the index
of the subgroup $H$ in the group $G$ and is denoted by $|G:H|$.
Let $H\subset G$ is a subgroup with index $r\in N\cup\{+\infty\}$.
Denote by $H_1=H, H_2,...,H_r$ the right cosets of $H$. For an
arbitrary finite subset (not necessary subgroup) $A\subset G$ denote
\begin{equation}
q^H_{A,i}=|A\cap H_i|, \ i=1,...,r, \label{1}
\end{equation}
where $|S|$ denotes the cardinal of $S$.

\begin{definition}\label{d1}
Let $A$ be a subset of $G$ and $\overrightarrow{k}=(k_1,...,k_m)$ be a vector
with
\begin{equation}
k_1,...,k_m\in \{1,...,|A|\},\ \ k_1\leq k_2\leq ...\leq k_m \ \ \mbox{and} \ \ \sum^m_{i=1}k_i=|A|.\label{2}
\end{equation}
A subgroup $H$ of index $r\geq m$ is called $(A,\overrightarrow{k})$-{\it ground subgroup} if there are
$i_1,...,i_m\in \{1,...,r\}, \ i_p\ne i_q, \ p\ne q$ such that $q^H_{A,i_j}=k_j, j=1,..,m.$
\end{definition}

Denote by ${\cal H}(A,\overrightarrow{k})\equiv {\cal H}_G(A,\overrightarrow{k})$ the set of all $(A,\overrightarrow{k})-$ ground
subgroups of a group $G$. For ${\cal H}(A,\overrightarrow{k})\ne \emptyset$ we denote
\begin{equation}
r_0\equiv r_0(A,\overrightarrow{k})=\min\{|G:H| : H\in {\cal H}(A,\overrightarrow{k})\}.\label{3}
\end{equation}

\begin{remark}
1. The ground subgroups can be similarly defined for the left cosets
of $H$.

2. If $A\subset G$ and a subgroup $H$ of index $r\geq 1$ are given
then it is obvious that the subgroup $H$ is $(A,\overrightarrow{k})$-ground subgroup
with $\overrightarrow{k}=(k_1,...,k_m)$ such that $k_j=|A\cap H_j|\ne 0$, $j=1,...,m.$
Thus ${\cal H}(A)=\bigcup_{\overrightarrow{k}}{\cal H}(A,\overrightarrow{k})$ is the set of all subgroups of $G$.
Moreover if we know all subgroups of a group $G$ then for a given $(A,\overrightarrow{k})$
it will be easy to describe the set ${\cal H}(A,\overrightarrow{k})$. But the inverse problem:
for a given $A$ and $\overrightarrow{k}$ with condition (\ref{2}) finding of an
$(A,k)$-ground subgroup is not easy.
\end{remark}

So the main problem of this paper is

{\bf Problem 1.} Let $G$ be a finite or countable group. For a given $A\subset G$ and $\overrightarrow{k}$
with conditions (\ref{2}) describe ${\cal H}(A,\overrightarrow{k})$ and find $r_0=r_0(A,\overrightarrow{k})$.

\begin{remark}
 Since $|A\cap H_i|= |gA\cap H_i|$ for any $g\in H$ and $i=1,..,r$, we have that
 if $H$ is an $(A,\overrightarrow{k})$-ground subgroup then it is $(gA, \overrightarrow{k})$-ground subgroup for any $g\in H$.
\end{remark}

\section{Simple groups}

Recall that in every group the set containing only the identity element 1,
and the group itself, are subgroups. Subgroups other than these are
called proper subgroups. A group is called simple if it has no proper
subgroups.

One can easily prove the following
\begin{proposition}\label{p1}
If $G$ is a simple group then for all $A\subset G$

1) ${\cal H}(A,\overrightarrow{k})=\left\{\{1\}\right\}$ and $r_0=|G|$
if $\overrightarrow{k}=(k_1,...,k_m)$ with $k_i=1$,
for all $i=1,...,m$.

2) ${\cal H}(A,\overrightarrow{k})=\left\{G\right\}$ and $r_0=1$
if $\overrightarrow{k}=(|A|)$ i.e. $m=1$.

3) ${\cal H}(A,\overrightarrow{k})=\emptyset$  if
$\overrightarrow{k}=(k_1,...,k_m)$ with at least one $k_i\in \{2,...,|A|-1\}$.
\end{proposition}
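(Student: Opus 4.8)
The plan is to exploit the only structural fact available about a simple group in the sense of this paper: apart from $\{1\}$ and $G$ itself there are no subgroups at all. Consequently, for any $A\subset G$ and any $\overrightarrow{k}$ satisfying (\ref{2}) we automatically have ${\cal H}(A,\overrightarrow{k})\subseteq\{\{1\},G\}$, and the whole task reduces to testing these two subgroups against Definition~\ref{d1}.

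First I would analyse $H=\{1\}$. Its right cosets are the one-element sets $\{x\}$, $x\in G$, so the index is $|G|$, each number $q^{\{1\}}_{A,i}$ equals $0$ or $1$, and exactly $|A|$ of them equal $1$ (one per element of $A$). Hence a choice of $m$ distinct cosets with prescribed intersection sizes $k_1\le\cdots\le k_m$ exists precisely when every $k_j$ equals $1$; in that case $m=|A|$ by the normalization $\sum_{i=1}^m k_i=|A|$, and the index condition $r\ge m$ holds automatically since $|G|\ge|A|$. So $\{1\}\in{\cal H}(A,\overrightarrow{k})$ if and only if $\overrightarrow{k}=(1,\dots,1)$.

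Next I would analyse $H=G$, whose unique right coset is $G$, so $r=1$ and $q^{G}_{A,1}=|A|$. The requirement $r\ge m$ in Definition~\ref{d1} forces $m=1$, and then necessarily $k_1=|A|$; conversely $\overrightarrow{k}=(|A|)$ clearly works, so $G\in{\cal H}(A,\overrightarrow{k})$ if and only if $\overrightarrow{k}=(|A|)$. Combining the two analyses yields statements 1)--3): if $\overrightarrow{k}=(1,\dots,1)$ then only $\{1\}$ survives, so ${\cal H}(A,\overrightarrow{k})=\{\{1\}\}$ and $r_0=|G:\{1\}|=|G|$; if $\overrightarrow{k}=(|A|)$ then only $G$ survives, so ${\cal H}(A,\overrightarrow{k})=\{G\}$ and $r_0=|G:G|=1$; and if some $k_i\in\{2,\dots,|A|-1\}$ then $\overrightarrow{k}$ is neither $(1,\dots,1)$ nor $(|A|)$, so ${\cal H}(A,\overrightarrow{k})=\emptyset$.

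I do not expect any genuine obstacle here: the proof is a direct application of Definition~\ref{d1} once the trivial list of subgroups is invoked. The only point to keep an eye on is the degenerate overlap at $|A|=1$, where the vectors $(1,\dots,1)$ and $(|A|)$ coincide and both $\{1\}$ and $G$ qualify; for $|A|\ge 2$ the three listed situations are pairwise disjoint and exhaust all admissible $\overrightarrow{k}$, which is presumably the intended reading.
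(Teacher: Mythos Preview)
The paper does not actually supply a proof of this proposition; it merely introduces it with ``One can easily prove the following'' and then moves on. Your argument fills in exactly the routine verification the author had in mind: since a simple group in the paper's sense has only the two subgroups $\{1\}$ and $G$, one checks each against Definition~\ref{d1} and reads off the three cases. Your treatment is correct and complete, and your remark about the degenerate overlap at $|A|=1$ (where both $\{1\}$ and $G$ qualify, so items 1) and 2) literally conflict) is a genuine observation that the paper glosses over; the intended reading is indeed $|A|\ge 2$.
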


\section{Cyclic groups}

Let $G$ be a cyclic group of order $n$ ($n$ may be infinity).
It is known that every subgroup of $G$ is cyclic. Moreover,
the order of any subgroup of $G$ is a divisor of $n$ and
for each positive divisor $p$ of $n$ the group $G$ has
exactly one subgroup of order $p$. Each infinite subgroup of $G$ is
$pZ$ for some $p$, which is bijective to (so isomorphic to) $Z$.
All factor groups of $Z$ are finite, except for the trivial
exception  $Z/{0}=Z/0Z$.

The following theorem gives upper estimation
for the set ${\cal H}$.

\begin{theorem}\label{t1}
Let $G$ be a cyclic group of finite order $n$ and
$1=n_0<n_1<...<n_{p-1}<n_p=n$ are all divisors of $n$.
Let $H^{(n_i)}$ be the subgroup of $G$ with index $n_i, i=0,...,p$.
For $A\subset G$  and $\overrightarrow{k}$ with conditions (\ref{2}) we have
$${\cal H}(A,\overrightarrow{k})\subseteq \widehat{{\cal H}}(A,\overrightarrow{k})=\left\{H^{(n_i)}: m\leq n_i\leq {n\over \|\overrightarrow{k}\|}\right\},$$
where $\|\overrightarrow{k}\|=\max\{k_i: 1\leq i\leq m\}$.
\end{theorem}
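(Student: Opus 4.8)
The plan is to lean on two structural facts about cyclic groups that are recalled just before the theorem: every subgroup of $G$ is uniquely determined by its index, and in a finite group all cosets of a subgroup have the same cardinality. First I would note that, since $G$ is cyclic of order $n$, its only subgroups are $H^{(n_0)}, H^{(n_1)}, \dots, H^{(n_p)}$, one for each divisor $n_i$ of $n$. Consequently any $(A,\overrightarrow{k})$-ground subgroup $H$ must coincide with some $H^{(n_i)}$, and the whole task reduces to deciding which of the indices $n_i$ are admissible, i.e. to proving $m \le n_i \le n/\|\overrightarrow{k}\|$ whenever $H^{(n_i)} \in {\cal H}(A,\overrightarrow{k})$.

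The lower bound is immediate: Definition \ref{d1} already requires a ground subgroup to have index $r \ge m$, so $n_i \ge m$. For the upper bound, suppose $H = H^{(n_i)} \in {\cal H}(A,\overrightarrow{k})$. By Definition \ref{d1} there are distinct indices $i_1, \dots, i_m$ with $q^H_{A,i_j} = k_j$ for $j = 1, \dots, m$; in particular the coset $H_{i_m}$ satisfies $|A \cap H_{i_m}| = k_m = \|\overrightarrow{k}\|$, using that the $k_j$ are nondecreasing so that $k_m = \max_j k_j$. Since $G$ is finite and $H$ has index $n_i$, every coset of $H$ has cardinality $|H| = n/n_i$, hence from $A \cap H_{i_m} \subseteq H_{i_m}$ we get $\|\overrightarrow{k}\| = k_m \le n/n_i$, that is $n_i \le n/\|\overrightarrow{k}\|$. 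Combining the two inequalities gives $m \le n_i \le n/\|\overrightarrow{k}\|$, so $H \in \widehat{{\cal H}}(A,\overrightarrow{k})$, which is exactly the claimed inclusion.

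I do not expect a genuine obstacle here; the proof is short and the only points requiring care are bookkeeping ones: that $\|\overrightarrow{k}\|$ equals the last component $k_m$ because of the ordering in \ref{d1}, and that "index $n_i$" genuinely forces $|H| = n/n_i$ so that the coset-size estimate applies. I would also add a remark that the inclusion is in general proper — a subgroup $H^{(n_i)}$ with $m \le n_i \le n/\|\overrightarrow{k}\|$ need not actually split $A$ according to $\overrightarrow{k}$ — which is precisely why the statement is phrased as an upper estimate for ${\cal H}(A,\overrightarrow{k})$ rather than an equality, and motivates the finer analysis carried out in the later sections.
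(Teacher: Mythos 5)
Your proof is correct and follows essentially the same route as the paper: the lower bound $m\le n_i$ comes straight from Definition \ref{d1}, and the upper bound comes from comparing $k_j=|A\cap H^{(n_i)}_j|$ with the common coset size $n/n_i$ (the paper bounds every $k_j$ and then takes the maximum, while you go directly to $k_m=\|\overrightarrow{k}\|$; this is the same estimate). Your preliminary observation that every subgroup of a finite cyclic group is one of the $H^{(n_i)}$ is a point the paper leaves implicit, and including it makes the inclusion statement cleaner.
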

\begin{proof} By construction of $\overrightarrow{k}$ we have $m\leq n_i$.
Let $H^{(n_i)}_j, j=1,2,..., {n\over n_i}$
be the right cosets of $H^{(n_i)}$. We have
$$
q^{H^{(n_i)}}_{A,j}=|A\cap H^{(n_i)}_j|=k_j\leq {n\over n_i},$$
i.e. $n_i\leq {n\over k_j}$ for all $j=1,..,m$ which implies
$n_i\leq {n\over \|\overrightarrow{k}\|}$.
\end{proof}
\begin{corollary}
 If $m>{n\over \|\overrightarrow{k}\|}$ then ${\cal H}(A,\overrightarrow{k})=\emptyset$.
 \end{corollary}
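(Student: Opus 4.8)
The plan is to read this off directly from Theorem~\ref{t1}. That theorem asserts the inclusion ${\cal H}(A,\overrightarrow{k})\subseteq\widehat{{\cal H}}(A,\overrightarrow{k})$, where $\widehat{{\cal H}}(A,\overrightarrow{k})$ is the collection of subgroups $H^{(n_i)}$ whose index $n_i$ (a divisor of $n$) satisfies the two-sided bound $m\le n_i\le n/\|\overrightarrow{k}\|$. So the first and essentially only step is to observe that this defining condition is vacuous under the hypothesis: if $m>n/\|\overrightarrow{k}\|$ then no number $n_i$ can be simultaneously $\ge m$ and $\le n/\|\overrightarrow{k}\|$, hence $\widehat{{\cal H}}(A,\overrightarrow{k})=\emptyset$, and therefore ${\cal H}(A,\overrightarrow{k})=\emptyset$ by the inclusion.

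Alternatively, one can give a self-contained argument by repeating the counting from the proof of Theorem~\ref{t1}. Suppose for contradiction that $H\in{\cal H}(A,\overrightarrow{k})$. Since $G$ is cyclic of order $n$, $H$ is the unique subgroup $H^{(n_i)}$ of some index $n_i\mid n$, so each of its right cosets has exactly $n/n_i$ elements. From $q^H_{A,i_j}=k_j$ and $k_j\le n/n_i$ for every $j=1,\dots,m$ we get $\|\overrightarrow{k}\|\le n/n_i$, i.e.\ $n_i\le n/\|\overrightarrow{k}\|$; and since Definition~\ref{d1} requires $m$ pairwise distinct cosets, the index satisfies $n_i\ge m$. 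These two bounds contradict $m>n/\|\overrightarrow{k}\|$, so no such $H$ exists.

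There is no real obstacle here: the corollary is an immediate logical consequence of Theorem~\ref{t1}, and the only point needing care is to match the index notation $r=|G:H|=n_i$ of Section~4 with the general index notation of Section~2 (and, for the self-contained version, to note that the bound $k_j\le n/n_i$ applies to the $m$ distinguished cosets indexed by $i_1,\dots,i_m$). I would therefore keep the written proof to a single sentence citing Theorem~\ref{t1}.
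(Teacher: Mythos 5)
Your proof is correct and matches the paper's intent exactly: the corollary is stated without proof precisely because it follows immediately from Theorem~\ref{t1}, since the hypothesis $m>n/\|\overrightarrow{k}\|$ makes the defining condition $m\le n_i\le n/\|\overrightarrow{k}\|$ of $\widehat{{\cal H}}(A,\overrightarrow{k})$ unsatisfiable. The one-sentence version citing the theorem is the right way to write it.
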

 The following example shows that the estimation of the set ${\cal H}(A,\overrightarrow{k})$
 given in Theorem \ref{t1}
 can not be improved i.e there are some $A$ and $\overrightarrow{k}$  such that
 ${\cal H}(A,\overrightarrow{k})=\widehat{{\cal H}}(A,\overrightarrow{k})$ and even there are
 some $A$ and $\overrightarrow{k}$ such that ${\cal H}(A,\overrightarrow{k})=\emptyset$.

 {\bf Example 1.} Consider $G=\{0,1,2,3,4,5\}, n=6$.
 All subgroups of $G$ are
 $$H^{(1)}=G, \ H^{(2)}=\{0,2,4\}, \ H^{(3)}=\{0,3\},\ H^{(6)}=\{0\}.$$
 Consider $A=A_1=\{1,2,3\}$ then $\overrightarrow{k}$ can be one of the following
 vectors:
 $$\overrightarrow{k}^{(1)}=(1,1,1),\ \overrightarrow{k}^{(2)}=(1,2),\ \overrightarrow{k}^{(3)}=(3)$$
and we have
$${\cal H}(A,\overrightarrow{k}^{(1)})=\left\{H^{(3)}, H^{(6)}\right\}=\widehat{{\cal H}}(A,\overrightarrow{k}^{(1)}),$$
$$ {\cal H}(A,\overrightarrow{k}^{(2)})=\left\{H^{(2)}\right\}\subset \widehat{{\cal H}}(A,\overrightarrow{k}^{(2)}), \
{\cal H}(A,\overrightarrow{k}^{(3)})=\left\{H^{(1)}\right\}\subset \widehat{{\cal H}}(A,\overrightarrow{k}^{(3)}).$$
Hence if $A=A_1$ for all $\overrightarrow{k}$ we have non-empty set of
ground subgroups. Now consider the case $A=A_2=\{0,1,2,4\}$ then $\overrightarrow{k}$ can be one of the following
 vectors:
 $$\overrightarrow{k}^{(1)}=(1,1,1,1),\ \overrightarrow{k}^{(2)}=(1,1,2),\ \overrightarrow{k}^{(3)}=(1,3),\ \overrightarrow{k}^{(4)}=(2,2), \overrightarrow{k}^{(5)}=(4)$$
and we obtain
$${\cal H}(A,\overrightarrow{k}^{(1)})=\left\{H^{(6)}\right\}\subset \widehat{{\cal H}}(A,\overrightarrow{k}^{(1)}),$$
$$ {\cal H}(A,\overrightarrow{k}^{(2)})=\left\{H^{(3)}\right\}=\widehat{{\cal H}}(A,\overrightarrow{k}^{(2)}), \
{\cal H}(A,\overrightarrow{k}^{(3)})=\left\{H^{(2)}\right\}=\widehat{{\cal H}}(A,\overrightarrow{k}^{(3)}),$$
$$ {\cal H}(A,\overrightarrow{k}^{(4)})=\emptyset, \
{\cal H}(A,\overrightarrow{k}^{(5)})=\left\{H^{(1)}\right\}=\widehat{{\cal H}}(A,\overrightarrow{k}^{(5)}).$$
So for $A=A_2$ we can have all possible cases: empty set, subset and equality.

Now we shall consider the case $n=\infty$ i.e $G=Z$.
Let $A$ be a finite subset of $Z$ and $p\in N$. Denote
$$A_{p,i}=\{x\in A: x=i({\rm mod} p)\}, \ i=0,1,...,p-1.$$
The following proposition is obvious

\begin{proposition}\label{p3} If $A\subset Z$ and $\overrightarrow{k}=(k_1,...,k_m)$
with (\ref{2}) are given, then
$pZ\in {\cal H}(A,\overrightarrow{k})$ if and only if
for any $j\in \{1,...,m\}$ there is  $i_j\in \{0,1,...,p-1\}$ such that
$|A_{p,{i_j}}|=k_j$.
\end{proposition}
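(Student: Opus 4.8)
The statement is a direct translation of Definition \ref{d1} into the language of residue classes, so the plan is simply to make that translation explicit. First I would record the coset structure of $pZ$ in $G=Z$: since $Z$ is abelian, left and right cosets coincide, the index $|Z:pZ|$ equals $p$, and the cosets are exactly the residue classes $i+pZ=\{x\in Z:\ x\equiv i\ ({\rm mod}\ p)\}$, $i=0,1,\dots,p-1$. Labelling the cosets $H_1,\dots,H_p$ of Definition \ref{d1} by these residues, formula (\ref{1}) becomes
$$q^{pZ}_{A,i}=|A\cap (i+pZ)|=|A_{p,i}|,\qquad i=0,1,\dots,p-1 .$$

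With this in hand I would just quote Definition \ref{d1}: it says that $pZ$ is an $(A,\overrightarrow{k})$-ground subgroup precisely when there are pairwise distinct indices $i_1,\dots,i_m$ among the cosets with $q^{pZ}_{A,i_j}=k_j$ for $j=1,\dots,m$, and under the residue labelling this is the same as asking for pairwise distinct residues $i_1,\dots,i_m\in\{0,1,\dots,p-1\}$ with $|A_{p,i_j}|=k_j$ for all $j$. Distinct cosets correspond to distinct residues and conversely, so the bookkeeping matches on the nose; moreover the hypothesis of Definition \ref{d1} that the index be at least $m$ is automatic, being a consequence of the existence of $m$ distinct such residues in $\{0,\dots,p-1\}$. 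This yields both implications simultaneously.

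There is no real obstacle here --- it is a definition chase --- and the only point I would be careful about is exactly the one the informal statement glosses over: the residues $i_j$ satisfying $|A_{p,i_j}|=k_j$ must be taken pairwise distinct, i.e. one may not reuse a single class $A_{p,i}$ to account for two different coordinates $k_j$. Once this distinctness is read into the stated condition, its equivalence with $pZ\in{\cal H}(A,\overrightarrow{k})$ is immediate, which is why the proposition is labelled obvious.
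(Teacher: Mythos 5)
Your proof is correct and matches the paper's intent exactly: the paper declares this proposition obvious and gives no proof, and your argument supplies precisely the definition chase (cosets of $pZ$ are the residue classes, so $q^{pZ}_{A,i}=|A_{p,i}|$) that justifies that label. Your remark that the residues $i_j$ must be taken pairwise distinct is a worthwhile clarification of a point the statement leaves implicit.
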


Points $x,y\in Z$ are called nearest-neighbors if $|x-y|=1$.

\begin{proposition} Let $A$ be an arbitrary finite subset of $Z$.
 Assume that there exists $q\in \{2,...,|A|\}$ such that any $B\subset A$ with
$|B|=q$ has at least one pair of nearest-neighbors. Then
${\cal H}(A,\overrightarrow{k})=\emptyset$ for any $\overrightarrow{k}=(k_1,...,k_m)$ with $m>1$ and
at least one coordinate $k_i=q$.
\end{proposition}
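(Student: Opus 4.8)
The plan is to argue by contradiction: suppose $H\in{\cal H}(A,\overrightarrow{k})$ and extract from it a pair of nearest-neighbors inside some $q$-element subset of $A$, contradicting the hypothesis. First I would recall that every subgroup of $Z$ has the form $pZ$ with $p\in\{0,1,2,\dots\}$, where $pZ$ has index $p$ when $p\geq 1$ and index $+\infty$ when $p=0$ (that is, $H=\{0\}$). Since $\overrightarrow{k}$ has $m>1$ coordinates, Definition \ref{d1} forces the index $r$ of $H$ to satisfy $r\geq m\geq 2$; in particular $H\neq Z$, so the only candidates are $H=\{0\}$ and $H=pZ$ with $p\geq 2$.

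Next I would dispose of the trivial subgroup. If $H=\{0\}$ then every right coset $H_i$ is a singleton, hence $q^H_{A,i}=|A\cap H_i|\in\{0,1\}$ for all $i$. But one coordinate of $\overrightarrow{k}$ equals $q\geq 2$, and by Definition \ref{d1} some coset must meet $A$ in exactly $q$ points, which is impossible. Therefore $H=pZ$ for some integer $p\geq 2$.

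The heart of the argument then uses Proposition \ref{p3}. Since some coordinate $k_{j_0}=q$, there is a residue $i_0\in\{0,1,\dots,p-1\}$ with $|A_{p,i_0}|=q$. Put $B=A_{p,i_0}$. Then $B\subseteq A$ and $|B|=q\in\{2,\dots,|A|\}$, so by the assumed property of $A$ the set $B$ contains a nearest-neighbor pair $x,y$ with $|x-y|=1$. But $x,y\in A_{p,i_0}$ means $x\equiv y\equiv i_0\pmod p$, so $p$ divides $x-y=\pm 1$, forcing $p=1$ and contradicting $p\geq 2$. Hence no such $H$ exists and ${\cal H}(A,\overrightarrow{k})=\emptyset$.

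I do not expect any real obstacle here; the only care needed is the exhaustive bookkeeping over the possible subgroups of $Z$ (the whole group, the trivial subgroup, and the $pZ$ with $p\geq 2$), after which the divisibility contradiction is immediate. One could equally bypass Proposition \ref{p3} and argue straight from Definition \ref{d1}, taking $B=A\cap H_{i_{j_0}}$ for the coset realizing the value $q$.
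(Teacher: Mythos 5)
Your proof is correct and follows essentially the same route as the paper's: take the coset that meets $A$ in exactly $q$ points, note that this intersection is a $q$-element subset of $A$ and hence contains a nearest-neighbor pair by hypothesis, and conclude $p\mid 1$, contradicting $p>1$. You are in fact slightly more careful than the paper, whose proof only treats $Z$ and $pZ$ with $p>1$ and does not explicitly dispose of the trivial subgroup $\{0\}$ as you do.
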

\begin{proof} For $m>1$, it is easy to see that $Z\notin {\cal H}(A,\overrightarrow{k})$.
Assume that there is $p>1$ such that $pZ\in {\cal H}(A,\overrightarrow{k})$
then by definition we should have $|A\cap (pZ+r)|=q$ for
some $r\in\{0,1,...,p-1\}$. Denote $B=A\cap (pZ+r)$, by conditions of
theorem we have $x,y\in B$ such that $x=y+1$. Since  $x, y\in pZ+r$,
$x-y=0({\rm mod} p)$ i.e. $1=0({\rm mod} p)$ and $p=1$ this is contradiction
to $p>1$.
\end{proof}
\section{Free groups}
Let $G$ be a countable free group.
For $A\subset G$ we denote
\begin{equation}
A^*=\{z\in G: \exists x,y\in A, z=xy^{-1}\}.\label{4}
\end{equation}
  If $G$ is generated by a set $M$, then $M$ is called
an irreducible set of generators if no proper subset of  $M$ is a set
of generators for $G$.
\begin{lemma}
1) $1\in A^*$, where $1$ is the identity of $G$. Moreover $A^*=\{1\}$ iff $|A|=1$;
2) $A=A^*$ iff $A$ is a subgroup;

3) $A\cap A^*=\emptyset$ iff $A$ is a subset of an irreducible
set of generators of $G$;

4) $A\subset A^*$ iff $1\in A$.
\end{lemma}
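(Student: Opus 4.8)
The plan is to peel off the routine items — item 1, item 2, and the easy halves of items 3 and 4 — using only the definitions of subgroup and of irreducible set of generators, and then to attack the two substantial implications (the converse halves of items 3 and 4) with the reduced-word normal form of a free group.

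Items 1, 2 and the forward half of 4 are immediate. Since $A$ is nonempty, fix $x\in A$; then $1=xx^{-1}\in A^{*}$, which is item 1, and if $|A|=1$ this is the only product available so $A^{*}=\{1\}$, while if $x\ne y$ lie in $A$ then $xy^{-1}$ is a nontrivial reduced word and $A^{*}\ne\{1\}$. For item 2: if $A$ is a subgroup then $1\in A$ gives $x=x1^{-1}\in A^{*}$ for every $x$, so $A\subseteq A^{*}$, while closure under $xy^{-1}$ gives $A^{*}\subseteq A$; conversely $A=A^{*}$ forces $1=xx^{-1}\in A$ and $xy^{-1}\in A$ for all $x,y\in A$, which is exactly the subgroup criterion. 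The implication $1\in A\Rightarrow A\subseteq A^{*}$ in item 4 is again just $x=x1^{-1}\in A^{*}$.

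For the $(\Leftarrow)$ half of item 3, suppose $A\subseteq M$ with $M$ an irreducible set of generators. Then $1\notin M$, since otherwise $M\setminus\{1\}$ would still generate $G$; hence $1\notin A$. Moreover, for distinct $x,y\in A$ one cannot have $xy^{-1}\in M$: otherwise $x=(xy^{-1})y$ would express $x$ as a product of the two distinct elements $xy^{-1}$ and $y$ of $M\setminus\{x\}$ — note $xy^{-1}\ne x$ because $y\ne 1$ — so $M\setminus\{x\}$ would already generate $G$, against irreducibility. Hence $A\cap A^{*}=\emptyset$. (This direction never uses freeness.)

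The two converse implications carry all the weight, and I expect them to be the main obstacle. For item 4 I would argue by reduced-word length: supposing $1\notin A$, take $w\in A$ of maximal length, write $w=ab^{-1}$ with $a,b\in A$, cancel the longest common suffix $s$ of $a$ and $b$ to get a reduced word of length $|a|+|b|-2|s|$, and try to contradict maximality of $|w|$ or the assumption $1\notin A$. The snag is that this extremal argument does not close by itself — for instance a set $\{g,g^{-1},g^{2},g^{-2}\}$ lying in a cyclic subgroup has every element in $A^{*}$ yet does not contain $1$ — so before grinding through it I would check whether the printed hypotheses really suffice or whether one must also assume, say, that $A$ contains no element together with its inverse. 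The converse of item 3 is similar: starting from $A\cap A^{*}=\emptyset$ one wants to enlarge $A$ to an irreducible set of generators of $G$, the natural route being to throw in a free basis of $G$ and then prune redundant generators by Nielsen transformations while never deleting an element of $A$; freeness should enter exactly in guaranteeing that such pruning is possible. Here $A\cap A^{*}=\emptyset$ blocks only the most visible redundancies $x=(xy^{-1})y$ among elements of $A$, and one still has to control redundancies arising inside $\langle A\rangle$ — already the rank-one case $G=\Z$ signals that this is the delicate point and may force a sharpening of the hypotheses.
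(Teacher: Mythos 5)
Your handling of items 1), 2), the forward implication of 4), and the implication from ``$A$ lies in an irreducible generating set'' to $A\cap A^{*}=\emptyset$ in 3) is correct, and is already more than the paper offers: the paper declares 1)--3) straightforward and writes out only 4). The real issue is the two converse implications you hesitated over, and your hesitation is exactly right: they fail as stated, so no completion of your sketch is possible. Your counterexample to 4) is valid. With $g$ a basis element of $G$, the set $A=\{g,g^{-1},g^{2},g^{-2}\}$ (or already $A=\{g,g^{-1},g^{2}\}$) satisfies $A\subset A^{*}$, since $g=g^{2}\cdot g^{-1}$, $g^{-1}=g\cdot (g^{2})^{-1}$ and $g^{2}=g\cdot (g^{-1})^{-1}$, yet $1\notin A$. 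The paper's argument for this direction deduces from $1\notin A$ and $A\subset A^{*}$ a relation $x_{m}x_{j}x_{i}^{-1}=1$ with $x_{m},x_{i},x_{j}\in A$ and calls this ``a contradiction to the assumption that $G$ is a free group''; that is precisely the error you predicted: a relation $uv=w$ among arbitrary elements of a free group (for example $u=v=g$, $w=g^{2}$) violates nothing, since freeness forbids relations among the members of a basis, not among the members of an arbitrary subset.

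The converse of 3) fails as well, confirming your worry about rank one. In the infinite cyclic group $Z$, which the hypotheses permit, take $A=\{4,6\}$; then $A^{*}=\{0,2,-2\}$ is disjoint from $A$, but no irreducible generating set $M$ of $Z$ can contain both $4$ and $6$: irreducibility forces $\gcd(M\setminus\{6\})$ to be a divisor of $4$ larger than $1$, hence every element of $M\setminus\{6\}$ is even, and since $6$ is also even, $M$ generates only a subgroup of $2Z$. So the honest conclusion of your proposal is the correct one: the four implications you actually proved are the true ones, and the remaining two are false without further hypotheses on $A$; the gap here lies in the lemma and in the paper's proof, not in your argument.
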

\begin{proof} 1)-3) are straightforward.

4) If $1\in A$ then for any  $x\in A$ we have $x1^{-1}=x\in A^*$ i.e $A\subset A^*$.
Now assume $A\subset A^*$ and $1\notin A$ then since $A$ is a
set as $A=\{x_1,...,x_q\}$ we should have
$1\ne x_m=x_ix_j^{-1}$ for any $m=1,...,q$ and some $i=i(m), j=j(m)\in\{1,...,q\}, i\ne j$
i.e. $x_mx_jx_i^{-1}=1$ this is contradiction to the assumption that $G$ is a free group.
\end{proof}

Note that if $\mathbf{1}=(1,...,1)$ is the vector defined by conditions (\ref{2})
then for an arbitrary $A\subset G$ we have $\{1\}\in {\cal H}(A, \mathbf{1})$.
But $|G:\{1\}|=\infty$. Now we shall give a construction
of an $(A,\mathbf{1})$-ground subgroup with finite index.

We shall use the following
\begin{theorem}\label{t2} {\rm \cite{K}} If $x\in G\setminus\{1\}$,
then there exists a normal subgroup $H_x$ of $G$, of
finite index, that does not contain $x$.
\end{theorem}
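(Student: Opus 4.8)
The plan is to produce a finite quotient of $G$ in which the image of $x$ is nontrivial, and then take $H_x$ to be the kernel of the corresponding homomorphism. Fix a set $M$ of free generators of $G$ and write $x$ in reduced form as $x = y_1 y_2 \cdots y_n$ with $n \geq 1$, where each $y_i \in M \cup M^{-1}$ and $y_i y_{i+1} \neq 1$ for $i = 1,\dots,n-1$. The idea is to let $G$ act on the finite set $\{0,1,\dots,n\}$ so that reading the letter $y_i$ performs the "step" $i-1 \mapsto i$; then reading all of $x$ moves $0$ to $n$.

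Concretely, for each generator $a \in M$ I would define a partial map $\sigma_a$ on $\{0,1,\dots,n\}$ by requiring $\sigma_a(i-1) = i$ for every $i$ with $y_i = a$, and $\sigma_a(i) = i-1$ for every $i$ with $y_i = a^{-1}$. The crucial point is that $\sigma_a$ is a well-defined partial injection: the only way two of these requirements could conflict (assigning two values to one point, or one value to two points) is to have $y_j = y_{j+1}^{-1}$ for some adjacent pair, i.e.\ $y_j y_{j+1} = 1$, which is excluded because the word is reduced. A partial injection of a finite set extends to a permutation $\bar\sigma_a$ of $\{0,\dots,n\}$; for the (finitely many that matter, and possibly infinitely many other) generators $a$ not occurring in $x$ I simply put $\bar\sigma_a = \mathrm{id}$.

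By the universal property of the free group $G$, the assignment $a \mapsto \bar\sigma_a$ extends to a homomorphism $\varphi \colon G \to S_{n+1}$ into the symmetric group on $\{0,\dots,n\}$. By construction $\varphi(y_i)$ carries $i-1$ to $i$, hence $\varphi(x) = \varphi(y_1)\cdots\varphi(y_n)$ carries $0$ to $n$; since $n \geq 1$ this means $\varphi(x) \neq \mathrm{id}$, i.e.\ $x \notin \ker\varphi$. Setting $H_x := \ker\varphi$, this subgroup is normal in $G$, and $G/H_x$ embeds in the finite group $S_{n+1}$, so $H_x$ has finite index and does not contain $x$, as required.

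I expect the only real work to be the bookkeeping in the second paragraph: checking carefully that each potential ambiguity in the definition of $\sigma_a$ — or each potential failure of injectivity — forces an adjacent cancelling pair $y_j y_{j+1} = 1$, thereby contradicting reducedness. Everything else (extending a partial injection of a finite set to a permutation, invoking the universal property of the free group, and noting that a kernel is normal of finite index when the target is finite) is routine.
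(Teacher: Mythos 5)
Your proof is correct and is essentially the classical argument from Kurosh that the paper cites (it gives no proof of its own): mapping $G$ into the symmetric group on $\{0,\dots,|x|\}$ so that $x$ acts nontrivially, then taking the kernel. Indeed, the upper bound $(|x|+1)!$ in the paper's inequality (\ref{7}) is exactly the order of your target group $S_{n+1}$, confirming that this is the construction the author has in mind.
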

Denote
\begin{equation}
H_{A}=\bigcap_{x\in A^*\setminus\{1\}}H_x,\label{5}
\end{equation}
where $A\subset G$ with  $|A|>1$,  $A^*$ is defined
by (\ref{4}) and $H_x$ is given in Theorem \ref{t2}.

Denote by $\alpha(x)$ the number of these
generators that occur in the reduced form of $x\in G$ and by $|x|$
the length of $x$.
\begin{proposition}\label{pp} The normal subgroup $H_A\subset G$
is a $(A, \mathbf{1})$-ground subgroup
for any $A\subset G$, with $|A|>1$. Moreover
\begin{equation}
\prod_{x\in A^*\setminus \{1\}}(\alpha(x)+1)\leq
|G:H_A|\leq \prod_{x\in A^*\setminus \{1\}}(|x|+1)!. \label{6}
\end{equation}
\end{proposition}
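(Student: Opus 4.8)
The plan is to verify the two separate assertions of Proposition~\ref{pp}: first that $H_A$ is an $(A,\mathbf{1})$-ground subgroup, and then the index bounds. For the ground-subgroup claim, recall that $H_A$ is a finite intersection of finite-index normal subgroups (using $|A|>1$, so $A^*\setminus\{1\}$ is finite and nonempty by part~1 of the Lemma), hence $H_A$ is itself a normal subgroup of finite index $r<\infty$; this puts us in the setting of Definition~\ref{d1} with $m=|A|$ and $\overrightarrow{k}=\mathbf{1}$. I would then show that the $|A|$ elements of $A$ lie in $|A|$ \emph{distinct} right cosets of $H_A$, which is exactly the requirement $q^{H_A}_{A,i_j}=1$ for $|A|$ distinct indices $i_j$. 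Take distinct $x,y\in A$; then $xy^{-1}\in A^*\setminus\{1\}$, so by the defining intersection (\ref{5}) and the property of $H_{xy^{-1}}$ from Theorem~\ref{t2} we get $xy^{-1}\notin H_A$, i.e. $H_A x\ne H_A y$. Hence distinct points of $A$ give distinct cosets, and each of those cosets meets $A$ in exactly one point; this establishes $H_A\in\mathcal{H}(A,\mathbf{1})$.

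For the upper bound in (\ref{6}), the idea is to choose the $H_x$ in Theorem~\ref{t2} as small as possible so as to make the estimate explicit. A standard realization is the following: if $x$ has reduced length $|x|$, map $G$ onto a symmetric group $S_{|x|+1}$ by sending the generators appearing in $x$ to suitable permutations so that $x$ itself acts nontrivially; the kernel $H_x$ is then normal of index dividing $(|x|+1)!$. Since $|G:H_A|=|G:\bigcap_x H_x|$ divides $\prod_{x\in A^*\setminus\{1\}}|G:H_x|$ (the intersection embeds $G/H_A$ into the product of the $G/H_x$), we obtain $|G:H_A|\le\prod_{x\in A^*\setminus\{1\}}(|x|+1)!$. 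For the lower bound, note that for each fixed $x\in A^*\setminus\{1\}$ the quotient $G/H_x$ must contain the image of $x$, which has order at least $2$; more carefully, only the $\alpha(x)$ distinct generators occurring in $x$ are relevant, and any nontrivial finite quotient exhibiting $x\neq 1$ must have order at least $\alpha(x)+1$ (a cyclic image of size $\alpha(x)$ generated by the images of those generators already forces $x=1$ when $x$ is, e.g., a product using each such generator). Multiplying these local lower bounds and again using that $G/H_A$ surjects onto each $G/H_x$ gives $\prod_{x\in A^*\setminus\{1\}}(\alpha(x)+1)\le|G:H_A|$.

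The main obstacle is the lower bound, and in particular pinning down exactly why each factor $G/H_x$ has order at least $\alpha(x)+1$ — this depends on a careful argument about the minimal size of a finite quotient of a free group in which a given reduced word survives, and on whether one is free to replace $H_x$ by a genuinely minimal such subgroup or must argue intrinsically about $H_A$. I would handle this by working with the canonical homomorphism $G\to G/H_A$ and analyzing, generator by generator, the constraints imposed by $x\ne 1$ in the image; the counting $\alpha(x)+1$ should come out as the number of distinct "states" needed to read the word $x$ nontrivially. The upper bound, by contrast, is routine once the explicit $S_{|x|+1}$-representation is written down, and the divisibility of $|G:H_A|$ by the product of the local indices is the elementary fact that a group embeds into the product of its quotients by a finite family of subgroups whose intersection is trivial in the quotient.
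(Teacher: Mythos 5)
Your treatment of the first assertion and of the upper bound in (\ref{6}) is correct and is essentially the paper's own argument: the paper likewise notes that $H_A\cap A^*=\{1\}$ forces $xy^{-1}\notin H_A$ for distinct $x,y\in A$, hence $|A\cap H_{A,j}|\in\{0,1\}$ for every coset, and it obtains $|G:H_x|\le(|x|+1)!$ from the explicit construction in Kurosh's proof of Theorem \ref{t2}; combined with the Poincar\'e bound $|G:\bigcap_x H_x|\le\prod_x|G:H_x|$ this gives the right-hand inequality exactly as you describe.

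The genuine gap is in your derivation of the lower bound. You claim to obtain $\prod_{x\in A^*\setminus\{1\}}(\alpha(x)+1)\le|G:H_A|$ by ``multiplying these local lower bounds and again using that $G/H_A$ surjects onto each $G/H_x$.'' Surjectivity of $G/H_A\to G/H_x$ for each $x$ yields only $|G:H_A|\ge\max_x|G:H_x|$ (or, at best, the least common multiple of the $|G:H_x|$), never the product; this step is a non sequitur. Moreover the product bound cannot be rescued from per-factor information: $A^*$ is closed under inversion, and a subgroup omits $z$ if and only if it omits $z^{-1}$, so one may take $H_{z^{-1}}=H_z$ in (\ref{5}); the two factors then contribute a single subgroup to the intersection while the claimed bound multiplies $(\alpha(z)+1)$ twice. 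Concretely, for $A=\{1,a\}$ in the free group on $a,b$ one has $A^*\setminus\{1\}=\{a,a^{-1}\}$, and the kernel of $G\to\Z/2\Z$, $a\mapsto 1$, $b\mapsto 0$, is an admissible $H_a=H_{a^{-1}}$ of index $2$, against the claimed lower bound $4$. You correctly sense that the lower bound is the delicate point, but you locate the difficulty in showing $|G:H_x|\ge\alpha(x)+1$ for a single $x$ (which does hold for Kurosh's $H_x$, whose quotient contains the $\alpha(x)$ distinct nontrivial generator images together with the identity); the real obstruction is passing from these single-factor bounds to a product bound on the intersection. For what it is worth, the paper's own proof is no more detailed at this point --- it records $\alpha(x)+1\le|G:H_x|\le(|x|+1)!$ and asserts that (\ref{5}) ``gives'' (\ref{6}) --- so your proposal reproduces, rather than repairs, the weak step.
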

\begin{proof} Let $H_{A,j}$ be a right coset of $H_A$.
By our construction of $H_A$ we get $H_A\cap A^*=\{1\}$,
consequently, for any $x,y\in A$ we have $xy^{-1}\notin H_A$, i.e
$|A\cap H_{A,j}|\in \{0,1\}$, for any $j$.
Following the proof of Theorem \ref{t2}, given in \cite{K} page 42,
one can easily see that
\begin{equation}
\alpha(x)+1\leq
|G:H_x|\leq (|x|+1)!, \label{7}
\end{equation}
which by (\ref{5}) gives (6).
\end{proof}

\section{An application: a model on a Cayley tree}

In this section we consider a model of statistical mechanics on a Cayley tree.
The Cayley tree $\Gamma^k=(V,L)$ of order $ k\geq 1 $ is an infinite tree,
i.e., a graph without cycles,
from each vertex of which exactly $k+1$ edges issue. Here $V$ is the set of
vertices and $L$ is the set of edges of $\Gamma^k$.

It is known that there exists a one-to-one
correspondence between the set  $V$ of vertices  of the Cayley tree
of order $k\geq 1$ and the group $G_{k}$ of the free products of
$k+1$ cyclic groups $\{e, a_i\}$, $i=1,...,k+1$ of the second order
(i.e. $a^2_i=e$, $a^{-1}_i=a_i$) with generators $a_1, a_2,...,
a_{k+1}$.

For $A\subseteq V$ a
spin {\it configuration} $\sigma_A$ on $A$ is defined as a function
 $x\in A\to\sigma_A(x)\in\Phi=\{1,2,...,q\}$; the set of all configurations
 coincides with $\Omega_A=\Phi^{A}$. We denote $\Omega=\Omega_V$ and $\sigma=\sigma_V$.
 Define a {\it periodic configuration} as a configuration $\sigma\in \Omega$
which is invariant under a subgroup of shifts $G^*_k\subset G_k$ of
finite index.

In \cite{R3} the following model of statistical mechanics was considered. 
For $A\subset V$ define
 the function $U: \sigma_A\in\Omega_A\to
U(\sigma_A)\in \big\{|A|-1, |A|-2,..., |A|-\min\{|A|, |\Phi|\}\big\}$ by
 \begin{equation} U(\sigma_A)=|A|-|\sigma_A\cap \Phi|, \label{8}
 \end{equation}
  where  $\Phi=\{1,2,...,q\}$ and $|\sigma_A\cap \Phi|$ is the
 number of distinct values of $\sigma_A(x), x\in A.$
 For instance if $\sigma_A$ is a constant configuration then $|\sigma_A\cap \Phi|=1$.

 Note that if $|A|=2$, say  $A=\{x,y\}$, then $U(\{\sigma(x),\sigma(y)\})=\delta_{\sigma(x),\sigma(y)}$
 i.e $U$ is a generalization of the Kronecker symbol.

Denote by $M_r$ the set of all balls
$b_r(x)=\{y\in V: d(x,y)\leq r\}$ with radius $r\geq 1$, where $d(x,y)$ is the distance
on the Cayley tree i.e the number of edges of the shortest path connecting $x$ and $y$.

 Now consider the Hamiltonian
\begin{equation}
 \mathbf{H}(\sigma)=-J\sum_{b\in M_r}U(\sigma_b), \label{9}
\end{equation}
 where $J\in R\setminus\{0\}.$

\begin{remark}
Since $U$ is a generalization of the Kronecker symbol, the 
Hamiltonian (\ref{9}) is a natural generalization of Potts model, 
for details about Potts model see e.g. \cite{GR}, \cite{S}.
\end{remark}
A periodic configuration $\phi\in \Omega$ is called
 a {\it ground state} of the Hamiltonian (\ref{9}) if
 $\mathbf{H}(\phi)\leq \mathbf{H}(\sigma)$ for all $\sigma\in \Omega$.

  Denote by  $GS(\mathbf{H})$ the set of all ground states of $\mathbf{H}$. Put
  $$K\equiv K(k, r)=|b_r(x)|=1+(k-1)^{-1}(k+1)(k^r-1),$$
  where $k\geq 2$ is the order of the Cayley tree and $r\geq 1$ 
  is an integer number.

\begin{theorem}\label{t3} 1. If $J>0$ then $GS(\mathbf{H})$ contains the constant configurations only
i.e it contains $q$ configurations $\sigma^{(i)}\equiv i$, $i=1,..,q$.

2. If $q$ is large enough and $J<0$ then $GS(\mathbf{H})$ contains at least ${q!\over (q-K)!}$ periodic
ground states, which are periodic with respect to the $(b, \mathbf{1})$-ground
subgroup $H_b$, with $b\in M_r$.
\end{theorem}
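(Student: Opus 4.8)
The plan is to analyze the two cases separately, using in both the elementary bound $0 \le U(\sigma_A) \le |A| - 1$ with the extremes characterized by (\ref{8}): $U(\sigma_A) = 0$ iff $\sigma_A$ is constant on $A$, and $U(\sigma_A) = |A| - \min\{|A|,q\}$ iff $\sigma_A$ takes as many distinct values as possible on $A$ (in particular, when $q \ge |A|$, iff $\sigma_A$ is injective on $A$).

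For part 1 ($J > 0$): First I would observe that $\mathbf{H}(\sigma) = -J \sum_{b \in M_r} U(\sigma_b)$ is minimized exactly when every summand $U(\sigma_b)$ is \emph{maximal}, hence one is tempted to want each $\sigma_b$ injective. But wait --- with $J>0$ and $U \ge 0$, minimizing $-J\sum U(\sigma_b)$ means \emph{maximizing} $\sum U(\sigma_b)$, so this is the ``most colorful'' regime, not the constant one. I would recheck the sign convention: since $\mathbf{H}(\sigma) = -J\sum_b U(\sigma_b)$ and each $U(\sigma_b) \ge 0$, for $J > 0$ we have $\mathbf{H}(\sigma) \le 0$ always, with equality iff $U(\sigma_b) = 0$ for every ball $b \in M_r$, i.e. iff $\sigma$ is constant on each ball $b_r(x)$. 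Since the balls $\{b_r(x): x \in V\}$ cover $V$ and any two vertices lie in a common ball (the Cayley tree is connected), $\sigma$ constant on every ball forces $\sigma$ globally constant. Thus the minimizers are precisely the $q$ constant configurations $\sigma^{(i)} \equiv i$; each is trivially periodic, giving $GS(\mathbf{H}) = \{\sigma^{(1)}, \ldots, \sigma^{(q)}\}$.

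For part 2 ($J < 0$, $q$ large): Now minimizing $-J\sum_b U(\sigma_b)$ with $-J > 0$ means \emph{maximizing} $\sum_b U(\sigma_b)$, i.e. making each ball as colorful as possible. When $q \ge K = |b_r(x)|$, the per-ball maximum $U(\sigma_b) = |b| - |b| = K - 1 \cdots$ wait, $U$ equals $|A|-\min\{|A|,q\} = K - K = 0$ at... no: the \emph{maximum} of $U$ is $|A|-1$ achieved when... no again. Let me fix this: $U(\sigma_A) = |A| - |\sigma_A \cap \Phi|$, so $U$ is \emph{large} when $|\sigma_A \cap \Phi|$ is \emph{small}, i.e. $U$ is maximized ($=|A|-1$) by constant configurations and minimized by injective ones. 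Hence for $J<0$ (maximizing $\sum U$) the ground states should again be forced toward constancy on balls --- but then parts 1 and 2 would coincide, which contradicts the theorem. So the intended reading must be that the ground state minimizes $\mathbf{H}$, and for $J<0$ this means \emph{minimizing} $\sum_b U(\sigma_b)$, i.e. each ball should be as injective as possible; when $q$ is large enough ($q \ge K$) one can achieve $U(\sigma_b)=0$ for all $b$ simultaneously. The construction: fix a ball $b = b_r(e) \in M_r$ and let $H_b$ be the $(b,\mathbf{1})$-ground subgroup supplied by Proposition \ref{pp} (applicable since $|b| = K > 1$), so each right coset of $H_b$ meets $b$ in at most one point. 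A configuration periodic with respect to $H_b$ is constant on each coset, and there are $|G_k : H_b|$ cosets; choosing injectively on the $K$ cosets that meet $b$ a $K$-tuple of distinct colors from $\Phi$ (possible as $q \ge K$), and extending arbitrarily on the remaining cosets, makes $\sigma$ injective on $b$, hence on every translate $gb$ with $g \in H_b$, and by the translation-invariance of $H_b$-periodic configurations and the homogeneity of the tree this handles all balls in $M_r$. Each such $\sigma$ gives $U(\sigma_{b'}) = 0$ for all $b' \in M_r$, so $\mathbf{H}(\sigma) = 0$, which is the minimum of $-J\sum_b U(\sigma_b)$ over all $\sigma$ since $-J > 0$ and $U \ge 0$. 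Counting: the number of ways to assign distinct colors to the $K$ relevant cosets is $q(q-1)\cdots(q-K+1) = q!/(q-K)!$, and distinct assignments give distinct configurations, yielding at least $q!/(q-K)!$ periodic ground states.

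The main obstacle I expect is the ``all balls'' step in part 2: one must verify that a single $H_b$-periodic configuration with injective restriction to the base ball $b$ actually stays injective on \emph{every} ball $b_r(x)$, $x \in V$, not just on the $H_b$-translates of $b$. This requires knowing that for each $x$ the restriction $\sigma|_{b_r(x)}$ is a translate of $\sigma|_{b_r(e)}$ up to the coset structure --- which follows because $G_k$ acts transitively on $V$ and $H_b$ is \emph{normal} of finite index (Proposition \ref{pp}), so cosets are permuted by left translation and the local color pattern around any vertex is a relabeling of that around $e$; one then checks that the chosen distinct-color assignment on $b$'s cosets was arbitrary enough that this relabeled pattern is still injective, or alternatively restricts the count to those assignments for which injectivity propagates --- either way the bound $q!/(q-K)!$ is what survives.
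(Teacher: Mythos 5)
Your part~2 follows essentially the paper's route (build the $(b,\mathbf{1})$-ground subgroup $H_b$ of Proposition \ref{pp}, colour its cosets, count the colourings), but both halves of your write-up have genuine problems.

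In part~1 the logic is inverted in two places, and the two errors happen to cancel in the conclusion. From (\ref{8}), $U(\sigma_A)=|A|-|\sigma_A\cap \Phi|$ is \emph{maximal} (equal to $|A|-1$) exactly when $\sigma_A$ is constant, and equals $0$ exactly when $\sigma_A$ takes the maximal possible number of distinct values; you assert the opposite (``$U(\sigma_b)=0$ \dots iff $\sigma$ is constant on each ball''). Moreover, for $J>0$ the bound $\mathbf{H}(\sigma)\le 0$ is attained at the \emph{maximum} of $\mathbf{H}$, not its minimum, so ``equality in $\mathbf{H}\le 0$'' does not characterize ground states. The correct one-line argument is the paper's: for $J>0$, minimizing $\mathbf{H}=-J\sum_b U(\sigma_b)$ means maximizing each summand $U(\sigma_b)$, i.e.\ making $\sigma$ constant on each ball, and since distinct balls overlap and chain through the whole tree this forces $\sigma$ to be globally constant. (Your side remark that any two vertices lie in a common ball is also false for vertices at distance $>2r$; it is the chaining through overlapping balls that you need.)

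In part~2 you eventually reach the right characterization (for $J<0$ one minimizes $\sum_b U$, i.e.\ makes every ball take pairwise distinct values), and you correctly identify that normality of $H_b$ is what makes it a $(b_r(g),\mathbf{1})$-ground subgroup for every centre $g$. But your construction --- distinct colours on the $K$ cosets meeting $b_r(e)$ and ``extending arbitrarily on the remaining cosets'' --- does not yield a ground state: a ball $b_r(x)$ with $x\notin H_b$ generally meets a \emph{different} set of $K$ cosets, and an arbitrary extension can repeat colours there, giving $U(\sigma_{b_r(x)})>0$. You flag this obstacle at the end but leave both proposed fixes unexecuted, so the proof is not closed. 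The clean fix (and the paper's intent, cf.\ the remark following the theorem that $q\ge |G_k:H_b|$ suffices) is to assign pairwise distinct colours to \emph{all} $N=|G_k:H_b|$ cosets; then every ball is automatically rainbow, because by the ground-subgroup property each coset meets each ball at most once, and the resulting count $q!/(q-N)!$ dominates $q!/(q-K)!$ since $N\ge K$ and $q\ge N$. With that replacement your part~2 becomes a complete argument matching the paper's.
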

 \begin{proof} 1) It is easy to see that for $J>0$ a
 configuration $\phi$ minimizes $\mathbf{H}$ iff
 it maximizes  $U$ on any ball $b\in M_r$ i.e iff it
 is a constant configuration.
 2) For $J<0$ a configuration $\phi$ minimizes $\mathbf{H}$ iff
 it also minimizes $U$ on any ball $b\in M_r$. So on any ball $b\in M_r$
 the configuration has to be with distinct values.  First consider $b(e)\in M_r$
 i.e the ball with the center $e$.  Note that an analogue of
   Theorem \ref{t2} is true for the group $G_k$ (see \cite{GR}). Consequently,
      Proposition \ref{pp} is also true for $G_k$. Hence we have a
      $(b(e), \mathbf{1})$-ground subgroup $H$ of $G_k$. It is easy to see that
      this subgroup is also $(b(g),\mathbf{1})$ -ground subgroup for any
      $g\in G_k$, where $b(g)\in M_r$ is a ball with the center $g$. So now we can define
      a $H$-periodic ground state $\phi$ as a function $\phi:G_k\to \Phi$ such that 
      $\phi(x)=i$ if $x$ is an element of
      the (right) coset $H_i$ of $H$. Since $q$ is large enough we can always choose
      distinct values of the configuration $\phi$ on distinct cosets and
      number of such choices is equal to ${q\choose K}\cdot K!={q!\over (q-K)!}$.\end{proof}
\begin{remark} By the proof 2) of Theorem \ref{t3} it is clear that if we know $|G_k:H|$
then $q$ can be chosen as $q\geq |G_k:H|$.
\end{remark}
\section{Discussion and open problems} For a given finite or countable group $G$ we have
defined a concept of ground subgroup for a subset $A$, and its given partition.
If groups are finite, we obtained a set of such subgroups and showed that this set
  can be empty for some suitable choice of $A$ and its partition. For free groups
   we proved that for an arbitrary $A$ one can construct a subgroup which divides
   the set $A$ to distinct (non-equivalent, see section 2) elements.
   In the section 6 we applied this result to describe the ground states of a
   model with an arbitrary finite interaction radius (i.e diameter of the balls: $2r$).
One of the key problems related to the (spin) models is the description of the set
of Gibbs measures. This problem has a good connection with the problem of
the description the set of ground states. Because the phase diagram of Gibbs
measures is close to the phase diagram of the ground states
for sufficiently small temperatures (see e.g.\cite{G}, \cite{S}).

Results of the paper show that, in general, the Problem 1 (see section 2) 
is very difficult. For example, I do not have any deep result about exact 
value of $r_0$ (see Problem 1). 

Note that from Problem 1 one can get more simple problem:
   let $A\subset G$ and its partition $A_i, \ i=1,...,m$, with $|A_i|=k_i$ are given,
if a subgroup $H$ is $(A, \overrightarrow{k})$ -ground subgroup then we can assume that
 $A_1\subset H$ but
  $A_i\cap H=\emptyset$ for any $i=2,...,m$. So if we denote $B=A\setminus A_1$ then
   the following problem is a particular case of our problem:

 {\bf Problem 2.} Let $G$ is a finite or countable group. Consider arbitrary subsets
 $A, B\in G$ such that $1\in A$ and $A\cap B=\emptyset$.
 Is there any subgroup $H=H(A,B)$ of $G$ such that $A\subset H$ and $B\cap H=\emptyset$?

Note that for an arbitrary  $A\subset G$ one can easily show
that there is $H$ with $A\subset H$. In section 5, for free groups
we have showed that for any $B\subset G$
there exists a subgroup $H_B$ such that $B\cap H_B=\emptyset$ if $1\notin B$. Even for
free groups I have not any idea how to construct a subgroup with the
properties mentioned in the Problem 2.

The following problem is very general:

{\bf Problem 3.} Let $A_1, A_2,...,A_m\subset G$ such
that $A_i\cap A_j=\emptyset$, $i\ne j$. Is there any subgroup
$H$ of $G$, with index $\geq m$, such that $A_i$ is a subset
of a right coset $H_{p(i)}$ and $p(i)\ne p(j)$ for $i\ne j$?

Note that Problem 1 is a particular case of the Problem 3. Indeed
any subgroup $H$ satisfying the conditions of the Problem 3 also satisfies
  the conditions of the Problem 1, with $A=\bigcup_iA_i$.

{\bf Acknowledgements.}
I thank the Abdus Salam International Center for
Theoretical Physics (ICTP), Trieste, Italy for providing financial
support of my visit to ICTP (February-April 2009).

\end{document}